\newtheorem{theorem}{Theorem}[section]
\newtheorem{lemma}[theorem]{Lemma}
\theoremstyle{remark}
\numberwithin{equation}{section}
\newcommand{\bC}{\mathbb{C}}
\newcommand{\bR}{\mathbb{R}}
\newcommand{\cK}{{\mathcal{K}}}
\newcommand{\cL}{{\mathcal{L}}}
\newcommand{\cR}{{\mathcal{R}}}
\newcommand{\dr}{\mathrm{d}\/r}
\newcommand{\dtheta}{\mathrm{d}\/\theta}
\newcommand{\dz}{\mathrm{d}\/z}
\newcommand{\et}{\quad\text{and}\quad}
\newcommand{\tf}{\widetilde{f}}
\newcommand{\tg}{\widetilde{g}}
\renewcommand\Re{\operatorname{Re}}
\renewcommand\Im{\operatorname{Im}}
\begin{document}

\baselineskip=15pt

\title[Paths of steepest descent]
{On the paths of steepest descent for the norm\\ of a one variable
complex polynomial}
\author{Damien Roy}

\subjclass[2020]{Primary 30C15; Secondary 30E10.}
\keywords{Jordan curve theorem, roots of polynomials, steepest descent, trees.}
\thanks{Research partially supported by NSERC}

\begin{abstract}
We consider paths of steepest descent, in the complex plane, for
the norm of a non-constant one variable polynomial $f$.  We show
that such paths, starting from a zero of the logarithmic derivative
of $f$ and ending in a root of $f$, draw a tree in the complex plane,
and we give an upper bound estimate on their lengths.  In some cases,
we obtain a finer estimate that depends only on the set of roots of
$f$, not on their multiplicity, and we wonder if this can be done
in general.  We also extend this question to finite Blaschke products
for the unit disk.
\end{abstract}

\maketitle

%
%

\section{Introduction}
\label{sec:intro}

In our study \cite{Ro2019} of Hermite approximations
to exponentials of algebraic numbers, we were lead to
several results on polynomials of $\bC[z]$ that we could
not find within the vast literature about zeros of
univariate polynomials, including the exposition
\cite{Ma1949} of results prior to 1949, and more recent
papers like \cite{Ru1984}.  The purpose of this note
is to report on these new results with a brief outline of
the proofs, to present a case where the main estimate can
be improved qualitatively, and to ask if such improvement
can be made in general.

To state the results, we fix a monic polynomial
$f(z)\in\bC[z]$ of degree $N\ge 1$, and factor it as a
product
\begin{equation}
\label{eq:f(t)}
 f(z)=(z-\alpha_1)^{n_1}\cdots(z-\alpha_s)^{n_s},
\end{equation}
where $A=\{\alpha_1,\dots,\alpha_s\}$ is the set of
its distinct complex roots and where $n_1,\dots,n_s$ are
positive integers with sum $N$.  We also denote by $\cK$
the convex hull of $A$, and by $R$ the radius of a closed
disk $D$ in $\bC$ containing $\cK$.  To avoid trivialities,
we assume that $s\ge 2$.  In \cite[\S5]{Ro2019}, we prove
the following result.

\begin{theorem}
\label{thm:descent-general}
Any path of steepest descent for $|f|$ linking a point
$\beta$ of $\cK$ to a root $\alpha$ of $f$ is contained
in $\cK$, with length at most $\pi N R$.
\end{theorem}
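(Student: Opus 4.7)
The plan is to treat the two claims separately: containment $\gamma\subset\cK$ will follow from a Gauss--Lucas-type boundary inequality, and the length bound $L(\gamma)\le\pi NR$ from integral geometry combined with the observation that $\gamma$ lies in the zero set of a real polynomial of degree $N$.

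For containment, I would identify the gradient of $\log|f|$ (viewed as a complex number) with $\overline{f'/f}=\overline{\sum_{j=1}^s n_j/(z-\alpha_j)}$, so that the steepest descent direction at a regular point is a positive multiple of $-\overline{f'/f}$. At $z\in\partial\cK$ with outward unit normal $u$, the outward component of $\nabla\log|f|$ equals
\[
\sum_{j=1}^s n_j\,\frac{\langle u,\,z-\alpha_j\rangle_\bR}{|z-\alpha_j|^2},
\]
which is nonnegative because each $\alpha_j$ lies on the inward side of the supporting line of $\cK$ at $z$. Hence the descent direction has nonpositive outward component, and $\gamma$ cannot exit $\cK$.

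For the length, by the Cauchy--Riemann equations the level curves of $|f|$ and of $\arg f$ are orthogonal, so $\gamma$ (being orthogonal to the level curves of $|f|$) lies along a single level curve $\arg f=\theta_0$ with $\theta_0:=\arg f(\beta)$. Consequently $\gamma\subset\{Q=0\}$, where $Q(z):=\Im(e^{-i\theta_0}f(z))$ is a real polynomial in $\Re z$, $\Im z$ of total degree $N$. Restricting $Q$ to any line $\ell$ gives a real polynomial of degree at most $N$ in the line parameter, with at most $N$ real roots, so every line meets $\gamma$ in at most $N$ points.

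The path $\gamma$ is simple (since $|f|$ is strictly decreasing along it) and rectifiable at its endpoints (near $\alpha$, the local form $f(z)\sim c(z-\alpha)^{n_\alpha}$ forces the path to approach $\alpha$ along one of $n_\alpha$ specific tangent directions, and similarly near a critical point $\beta$). Centering coordinates so that $D$ is the disk $|z|\le R$, the Cauchy--Crofton formula then gives
\[
L(\gamma)=\tfrac{1}{2}\int_0^{\pi}\int_{-R}^{R} n_\gamma(p,\phi)\,dp\,d\phi
\le \tfrac{1}{2}\cdot N\cdot 2R\cdot\pi = \pi NR.
\]
The main obstacle I foresee is justifying the intersection bound $n_\gamma\le N$ uniformly in $(p,\phi)$: for the finite set of directions $\phi$ with $\sin(N\phi-\theta_0)=0$ the polynomial $Q|_\ell$ may drop degree or even vanish identically, but these $\phi$ form a measure-zero subset of $[0,\pi)$ and do not affect the integral.
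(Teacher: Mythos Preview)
Your proof is correct and follows essentially the same approach as the paper: the containment argument is the dual formulation of the paper's (you show that at $\partial\cK$ the descent field points inward, while the paper shows that outside $\cK$ the ascent field points outward---both rest on the same Gauss--Lucas sign computation for $f'/f=\sum n_j/(z-\alpha_j)$), and the length bound is identical, using Cauchy--Crofton together with the fact that along $\gamma$ the polynomial $\Im\bigl(e^{-i\theta_0}f(z)\bigr)$ vanishes, so each line meets $\gamma$ in at most $N$ points (away from a measure-zero set of directions).
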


By a path of steepest descent (resp.\ steepest ascent) for $|f|$,
we mean a continuous piecewise differentiable curve $\gamma\colon I\to\bC$,
defined on an interval $I$ of $\bR$, which is an integral curve
for the gradient of $|f|$, and along which $|f|$ is decreasing
(resp.\ increasing). In the present paper, we show a case where the upper bound
on the length of the path depends only on the set of roots of $f$
and not on their multiplicities.

\begin{theorem}
\label{thm:general-to-boundary}
Suppose that a path of steepest descent for $|f|$ links a point
$\beta$ of $\cK$ to a root $\alpha$ of $f$ on the boundary of $\cK$.
Then, it has length at most $2\pi s R$.
\end{theorem}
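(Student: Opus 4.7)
My plan is to mimic the integral--geometric argument that, to my understanding, underlies the proof of Theorem~\ref{thm:descent-general}, sharpening the intersection count by means of the extremality hypothesis on $\alpha$.

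Along the path $\gamma$ the argument $\theta=\arg f(z)$ is constant; hence $\gamma$ is contained in the real algebraic curve $C_\theta:=\{z\in\bC:\Im(e^{-i\theta}f(z))=0\}$, which has degree $N$. By the Cauchy--Crofton formula
\[
\text{length}(\gamma)=\tfrac{1}{2}\int \#(\gamma\cap\ell)\,d\mu(\ell),
\]
where $\mu$ is the kinematic measure on affine lines in $\bR^2$; moreover, the total $\mu$-mass of lines meeting the disk $D$ of radius $R$ equals its perimeter $2\pi R$. Combining these two facts with the Bezout bound $\#(\gamma\cap\ell)\le N$ yields the estimate $\pi NR$ of Theorem~\ref{thm:descent-general}; to obtain the finer bound $2\pi sR$ it therefore suffices to prove the sharpened count
\[
\#(\gamma\cap\ell)\le 2s\qquad\text{for every line $\ell$ meeting }D.
\]

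The extremality of $\alpha$ should drive this estimate. Since $\alpha$ is a vertex of $\cK$, there is a closed supporting half-plane $H$ with $\cK\subseteq H$ and $\alpha\in\partial H$; after an isometry one may take $\alpha=0$ and $H=\{z\in\bC:\Re z\ge 0\}$, so that $\gamma\subseteq H\cap D$ by Theorem~\ref{thm:descent-general}. The strategy is then to replace the ambient curve $C_\theta$ (of degree $N$) by a smaller real algebraic curve of degree at most $2s$ that still contains $\gamma$, built from the squarefree polynomial $P(z)=\prod_{j=1}^{s}(z-\alpha_j)$ together with the linear functional defining $H$. Applying Bezout to this smaller curve then yields the refined count, and hence the announced bound on the length of $\gamma$.

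The main obstacle is precisely this algebraic reduction: one must absorb the multiplicity contributions $n_j-1$ at each root into a lower-degree locus, using both the monotonicity of $|f|$ along $\gamma$ and the vertex geometry at $\alpha$. I expect that making this step rigorous---ruling out the $n_j-1$ ``phantom'' branches of $C_\theta$ at each root that do not lie on $\gamma$---is where the technical core of the proof lives. Should this algebraic route prove intractable, a fallback is a direct combinatorial count of the crossings of $\gamma$ with a fixed line $\ell$, organised by the strict monotonicity of $|f|$ along $\gamma$ and the restricted angular sector in which $\gamma$ must arrive at the vertex $\alpha$.
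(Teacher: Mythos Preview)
Your framework is right: Cauchy--Crofton plus the target $\#(\gamma\cap\ell)\le 2s$ is exactly what one needs. But the mechanism you propose for that bound has a genuine gap, as you yourself flag. There is no algebraic curve of degree $\le 2s$ containing $\gamma$: the locus $\{\Im(e^{-i\theta}f(z))=0\}$ really has degree $N$, and the ``phantom branches'' cannot be stripped away by purely algebraic means, since $\gamma$ is a trajectory selected by an ODE, not an algebraic object. Also, the boundary hypothesis on $\alpha$ is not used via a supporting half-plane; $\alpha$ need not be a vertex of $\cK$, merely a boundary point.

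The paper's argument is different in two essential respects. First, the multiplicities disappear from the \emph{tangent direction} of $\gamma$, not from any ambient curve: since $f(\gamma(t))=t f(\beta)$ one has
\[
 t\,\gamma'(t)=\frac{f(\gamma(t))}{f'(\gamma(t))}=\frac{P(\gamma(t))}{Q(\gamma(t))},
\]
where $P(z)=\prod_{j}(z-\alpha_j)$ has degree $s$ and $Q(z)=N\prod_{k}(z-\beta_k)^{m_k}$ has degree $s-1$. Second, the boundary hypothesis on $\alpha$ is used \emph{topologically}: one extends $\gamma$ beyond $\beta$ until it exits $\cK$ at some $\gamma(c)$ and closes it up along $\partial\cK$ to a Jordan curve $\Gamma$. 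For a generic line $\ell=D_{r,\theta}$, consecutive transverse crossings of $\gamma$ with $\ell$ must lie on opposite sides of $\Gamma$, so $h(t)=\Re(e^{-i\theta}\gamma'(t))$ alternates sign at those crossings. Via the displayed formula, this forces the real polynomial
\[
 \widetilde g_{r,\theta}(u)=\Re\!\left(e^{-i\theta}P\big((r+iu)e^{i\theta}\big)\,\overline{Q}\big((r-iu)e^{-i\theta}\big)\right),
\]
of degree at most $2s-1$, to alternate sign at the crossing parameters $u_1<\cdots<u_n$, whence $n\le 2s$. So the reduction to degree-$s$ data goes through the logarithmic derivative $f/f'$ together with a Jordan-curve sign-alternation argument, not through an auxiliary low-degree curve or a supporting half-plane at $\alpha$.
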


Note that the above condition on $\alpha$ is necessary fulfilled
when $s\le 3$.  We wonder if a similar estimate, with an upper bound
depending only on $s$ and $R$, holds in general without the condition that
$\alpha$ lies on the boundary of $\cK$.

The proofs of the above theorems are given in the next section.
In section \ref{sec:tree}, we construct a tree out of the paths
of steepest descent from the zeros of $f'/f$ to the zeros of $f$
and give an application.  We discuss an analog problem for
finite Blaschke products in section \ref{sec:Blaschke}.

\section{Paths of steepest descent}
\label{sec:paths}

Let $c\in\bC\setminus\{0\}$ and let $I$ be a closed
subinterval of $\bR$.  Since $f\colon\bC\to\bC$ is a
ramified covering of Riemann surfaces, there
exists a continuous map $\gamma\colon I\to\bC$ (not unique
in general) such that $f(\gamma(t))=ct$ for each $t\in I$.
More precisely, let $t_0\in I$, let $z_0=\gamma(t_0)$, and
let $\ell$ denote the order of $f(z)-f(z_0)$ at the point
$z_0$.  Then there is a bi-holomorphic map $h\colon U\to V$
from an open neighborhood $U$ of $0$ to an open disk $V$
centered at $0$ of radius $\epsilon>0$, such that
\begin{equation}
 \label{paths:h}
 f(z)-f(z_0)=h(z-z_0)^\ell \quad\text{whenever $z-z_0\in U$.}
\end{equation}
This yields $c(t-t_0)=h(\gamma(t)-\gamma(t_0))^\ell$ for each
$t\in I$ with $|t-t_0|<\epsilon^\ell/|c|$.  If $\ell=1$, then
$\gamma(t)=\gamma(t_0)+h^{-1}(c(t-t_0))$ is an analytic function
of $t-t_0$ for those $t$.  Otherwise, it is represented by a
convergent series in $(t_0-t)^{1/\ell}$ to the left of $t_0$
and by a convergent series in $(t-t_0)^{1/\ell}$ to the right
of $t_0$.  In practice $\gamma$ is obtained by pasting such
local maps (as in \cite[Theorem 4.14]{Fo1981}) and can be
extended to a continuous map $\gamma\colon \bR\to\bC$
satisfying $f(\gamma(t))=ct$ for each $t\in\bR$.

Since $f$ is conformal, except at the zeros of $f'$, since it maps
level curves of $|f|$ to circles centered at $0$, and since the
line $z=ct$ ($t\in\bR$) is perpendicular to these circles, the image
of the above map $\gamma$ is a curve that is perpendicular to the
level curves of $|f|$ at each point $z_0=\gamma(t_0)$ with
$f'(z_0)\neq 0$.  From this observation, we derive the following
statement which shows a connection with the phase plot of $f$, namely
the graph of $f/|f|$ (see \cite{WS2011}).

\begin{lemma}
\label{paths:lemma}
For any $\beta\in\bC$, the continuous functions
$\gamma\colon [0,1]\to\bC$ such that
\begin{equation}
 \label{eq:gamma}
 \gamma(1)=\beta \et f(\gamma(t))=tf(\beta)
 \ \ \text{for each \ $t\in[0,1]$,}
\end{equation}
parameterize (in reverse direction) the curves of steepest descent for
$|f|$ from $\beta$ to a root $\alpha=\gamma(0)$ of $f$.
\end{lemma}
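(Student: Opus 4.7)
My plan is to apply the lifting construction recalled in the paragraphs preceding the lemma, specialized to $c=f(\beta)$ and the initial condition $\gamma(1)=\beta$. This produces, on the interval $[0,1]$, a continuous function $\gamma$ satisfying \eqref{eq:gamma}. The point $\alpha:=\gamma(0)$ is then automatically a root of $f$ since $f(\alpha)=0$, and the identity $|f(\gamma(t))|=t|f(\beta)|$ shows that $|f|\circ\gamma$ strictly decreases as $t$ runs from $1$ down to $0$, so the trace of $\gamma$ really does link $\beta$ to a root along which $|f|$ descends.

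Next I would verify that $\gamma$, traversed in reverse, is an integral curve of the gradient field of $|f|$. Away from the zeros of $f'$, differentiation of $f(\gamma(t))=tf(\beta)$ yields $\gamma'(t)=f(\beta)/f'(\gamma(t))=f(\gamma(t))/(t\,f'(\gamma(t)))$; multiplying numerator and denominator by $\overline{f'(\gamma(t))}$ expresses this as the positive real scalar $1/(t|f'(\gamma(t))|^2)$ times $\overline{f'(\gamma(t))}\,f(\gamma(t))$, which is, up to the positive factor $2$, the complex representation of $\nabla|f|^2$ at $\gamma(t)$. Hence $\gamma'(t)$ points in the direction of steepest ascent of $|f|$, and the reverse-time traversal is a path of steepest descent.

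For the converse inclusion, I would start from an arbitrary steepest descent path $\sigma\colon[0,T]\to\bC$ from $\beta$ to a root $\alpha$. Off the critical set, $\sigma'$ is a negative real multiple of $\overline{f'(\sigma)}f(\sigma)$, so $(f\circ\sigma)'(u)=-\mu(u)\,f(\sigma(u))$ for a continuous positive function $\mu$; this ODE forces $f\circ\sigma$ to trace the straight segment from $f(\beta)$ down to $0$. Writing $f(\sigma(u))=\rho(u)f(\beta)$ with $\rho$ continuous and strictly decreasing from $1$ to $0$, the change of variable $t=\rho(u)$ produces $\gamma(t):=\sigma(\rho^{-1}(t))$ satisfying both conditions in \eqref{eq:gamma}. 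This identifies $\sigma$ as the reverse of some $\gamma$ from the family described in the lemma.

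The main technical obstacle is the treatment of points where $f'$ vanishes along $\gamma$, most notably the endpoint $\alpha$ when it is a multiple root of $f$: there the map $\gamma$ fails to be smooth and is instead governed by the Puiseux expansion in $(t-t_0)^{1/\ell}$ recorded in \eqref{paths:h}. I would use that local representation to confirm that the tangent direction of $\gamma$ continues to lie in the ascent gradient field wherever the field is defined, to handle the boundary behaviour at $\alpha$, and to justify the reparametrization step of the converse across such branch points without losing the monotonicity of $\rho$.
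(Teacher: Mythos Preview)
Your proposal is correct and in essence matches the paper's justification, though the paper is terser and more geometric: it simply observes that $f$ is conformal away from the zeros of $f'$, maps level curves of $|f|$ to circles centered at $0$, and maps the lift $\gamma$ to the radial segment $z=ct$, which is perpendicular to those circles; conformality then forces $\gamma$ to be perpendicular to the level curves of $|f|$, i.e.\ tangent to the gradient field. Your direct computation of $\gamma'(t)$ as a positive multiple of $\overline{f'(\gamma(t))}\,f(\gamma(t))=\tfrac12\nabla|f|^2(\gamma(t))$ is the analytic transcription of exactly this orthogonality statement, so the two arguments are equivalent.

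Where you go beyond the paper is in spelling out the converse: that every steepest-descent path $\sigma$ satisfies $(f\circ\sigma)'=-\mu\,(f\circ\sigma)$ and hence, after the monotone reparametrization $t=\rho(u)$, becomes one of the lifts $\gamma$ in \eqref{eq:gamma}. The paper leaves this direction implicit in the phrase ``From this observation, we derive\dots''; your explicit treatment, together with your remark that the Puiseux expansions \eqref{paths:h} handle the isolated critical points along the way, makes the bijection claimed in the lemma fully rigorous.
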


Let $B=\{\beta_1,\dots,\beta_p\}$ denote the set of distinct zeros
of $f'$ not in $A$ (the zeros of the logarithmic derivative $f'/f$).
By a theorem of Gauss-Lucas, these zeros belong to the relative
interior of $\cK$ \cite[Theorem 6.1]{Ma1949}.  In view of \eqref{eq:f(t)},
we find
\begin{equation}
\label{eq:f'(t)}
 f'(z)=N(z-\alpha_1)^{n_1-1}\cdots(z-\alpha_s)^{n_s-1}
 (z-\beta_1)^{m_1}\cdots(z-\beta_p)^{m_p},
\end{equation}
for positive integers $m_1,\dots,m_p$ with sum $s-1$.  For a path
$\gamma\colon[0,1]\to\bC$ as in Lemma \ref{paths:lemma}, we find that
\begin{equation}
\label{eq:gamma'}
 \gamma'(t)=\frac{f(\beta)}{f'(\gamma(t))}
   =\frac{f(\gamma(t))}{tf'(\gamma(t))}
   =\frac{1}{t}\Big(\sum_{j=1}^s\frac{n_j}{\gamma(t)-\alpha_j}\Big)^{-1},
\end{equation}
for each $t\in (0,1)$ with $\gamma(t)\notin B$.  In particular the above
formula holds for each $t\in(0,1)$ such that $\gamma(t)\notin \cK$.
In the latter situation, consider the smallest sector with
vertex at $\gamma(t)$ containing $\cK$.  Then it follows from \eqref{eq:gamma'}
that $\gamma'(t)$ points in the opposite sector, away from $\cK$, as
illustrated in Figure \ref{fig1}, and so from that point, the path
$\gamma$ does not come back to $\cK$.  This means that, if $\gamma(1)$
is a point $\beta$ of $\cK$, the image of $\gamma$ is fully contained
in $\cK$, as asserted in Theorem \ref{thm:descent-general}.

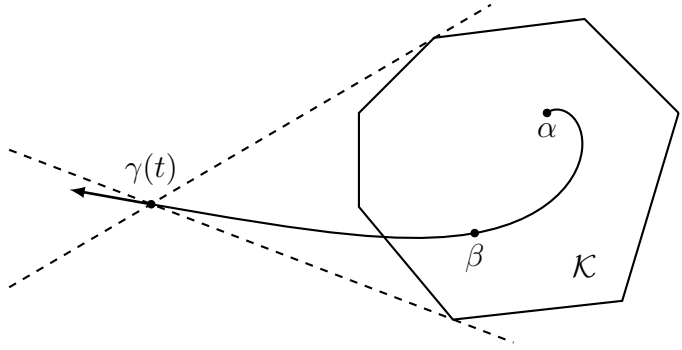
\begin{figure}[h]
 \begin{tikzpicture}[xscale=0.5,yscale=0.5]
       \draw [thick, domain=0:9, samples=40]
             plot ( 40-23*\x: {0.2*\x+\x/(10-\x)});
       \draw[thick] (3.5,0) -- (1,2.5) -- (-3,2) -- (-5,0) -- (-5,-2.5) -- (-2.5,-5.5) -- (2,-5) -- (3.5,0);
       \node[draw,circle,inner sep=1pt,fill] at (0,0) {};
       \draw (0,0) node[below]{$\alpha$};
       \node[draw,circle,inner sep=1pt,fill] at (-121:3.73) {};
       \draw (-121:3.73) node[below]{$\beta$};
       \draw (1,-3.5) node[below]{$\cK$};
       \node[draw,circle,inner sep=1pt,fill] at (-167:10.8) {};
       \draw (-10.52,-2.2) node[above]{$\gamma(t)$};
       \draw[thick, dashed] (-14.3,-4.64)--(-1.49,2.88);
       \draw[thick, dashed] (-14.3,-0.98)--(-0.89,-6.11);
       \draw[-latex,line width=1pt] (-10.52,-2.43)--(-12.7,-2.04);
       \end{tikzpicture}
\caption{A path of steepest ascent that leaves $\cK$ does not come back.}
\label{fig1}
\end{figure}

To estimate the length $\cL(\gamma)$ of a path $\gamma\colon[0,1]\to\bC$
satisfying \eqref{eq:gamma} with $\beta=\gamma(1)\in\cK\setminus A$, we proceed
as in \cite{Ro2019}.  For each angle $\theta\in [0,2\pi]$ and
each $r\in\bR$, we form the line
\begin{equation}
 \label{eq:Drtheta}
 D_{r,\theta}=\{(r+iu)e^{i\theta}\,;\,u\in\bR\}
\end{equation}
and denote by $N(r,\theta)$ the number of points of intersection
of the image of $\gamma$ with this line.  Since $\gamma$ is continuous
and piecewise differentiable, the Cauchy-Crofton formula gives
\begin{equation}
 \label{eq:CC}
 \cL(\gamma)=\frac{1}{4}\int_0^{2\pi} \dtheta \int_{-\infty}^\infty N(r,\theta) \dr
\end{equation}
(see the elegant proof of \cite{AD1997}).  We note that $f(z)/f(\beta)=t\in\bR$ for
each point $z=\gamma(t)$ with $t\in [0,1]$, so $N(r,\theta)$ is at most equal to
the number of roots $u\in\bR$ of the polynomial
\begin{equation}
 \label{eq:grtheta}
 g_{r,\theta}(u)
 = \mathrm{Im}\left(\frac{f\big((r+iu)e^{i\theta}\big)}{f(\beta)}\right)
 \in \bR[u],
\end{equation}
where $\mathrm{Im}(z)$ stands for the imaginary part of $z$.
Since the coefficient of $u^N$ in this polynomial depends only on $\theta$
and does not vanish except for at most $N$ values of $\theta$ in $[0,2\pi)$,
it follows that, aside from these, we have $N(r,\theta)\le N$ for
all $r\in\bR$.  Since the image of $\gamma$ is contained in the disk $D$ with
radius $R$, we also have $N(r,\theta)=0$ outside of an interval of length $2R$
for $r$.  We deduce that $\Big|\int_\bR N(r,\theta) \dr\big| \le 2NR$ for all
but a finite number of $\theta\in[0,2\pi]$, and so $\cL(\gamma)\le \pi N R$.

\begin{proof}[Proof of Theorem \ref{thm:general-to-boundary}]
Suppose now that $\alpha:=\gamma(0)$ lies on the boundary of $\cK$.
Our goal is to show that $\cL(\gamma)\le 2\pi sR$.  To this end, we may
assume that $\cK$ has non-empty interior because otherwise $\cK$ is a line
segment of length at most $2R$ and we are done.  Under that hypothesis,
the point $\beta$ belongs to that interior as well as $\gamma(t)$ for each
$t\in(0,1]$.

We first extend $\gamma$ to a continuous function $\gamma\colon\bR\to\bC$
satisfying $f(\gamma(t))=tf(\beta)$ for each $t\in\bR$.  Then there exists
a unique point $c>1$ for which $\gamma(c)$ lies on the boundary of $\cK$: for
$0<t<c$, the point $\gamma(t)$ belongs to the interior of $\cK$ and, for $t>c$,
it is outside of $\cK$ (the proof uses \eqref{eq:gamma'} as above). Then, we get
a simple closed curve $\Gamma$ by following $\gamma$ on $[0,c]$ and coming back
to $\alpha=\gamma(0)$ along the polygonal boundary of $\cK$. By a theorem of
Jordan, $\Gamma$ divides $\bC$ into two connected components $\cR_1$ and $\cR_2$
having $\Gamma$ as their common boundary.

Now, consider an angle $\theta\in[0,2\pi]$.  We claim that there are finitely
many real numbers $r$ for which $D_{r,\theta}$ meets $A\cup B$ or is tangent
to $\Gamma$ at a point $\gamma(t)\in\cK\setminus B$ with $t\in[0,c]$.  This is
because, for each $t_0\in\bR$, there exists $\epsilon>0$ such that the restriction
of $\gamma(t)$ to $(t_0-\epsilon,t_0]$ and $[t_0,t_0+\epsilon)$ is given by convergent
power series in $|t-t_0|^{1/\ell}$ where $\ell-1$ is the order of $f'$ at $\gamma(t_0)$.
Then, the function
\begin{equation}
 \label{eq:h}
 h(t)=\Re(\gamma'(t)e^{-i\theta}),
\end{equation}
where $\Re(z)$ stands for the real part of $z$, is given by convergent
power series in $|t-t_0|^{1/\ell}$ on $(t_0-\epsilon,t_0)$ and on $(t_0,t_0+\epsilon)$.
On such an interval $I$, either $h$ is identically zero or it has finitely many zeros.
Thus, $\Re(\gamma(t)e^{-i\theta})$ takes finitely many values on the set of zeros of
$h$ on $I$ and these are precisely the values $r\in\bR$ for which $D_{r,\theta}$ is
tangent to the open arc $\gamma(I)$.  The claim follows since $[0,c]$ is compact and so
it can be covered by finitely many intervals $(t_0-\epsilon,t_0+\epsilon)$ with those
properties.

Suppose that $(ie^{i\theta})^N\notin\bR$, so that the polynomial $g_{r,\theta}$
given by \eqref{eq:grtheta} has degree $N$ for each $r\in\bR$.  Then $D_{r,\theta}$
meets $\gamma([0,c])$ in at most $N$ points
\[
 z_j=\gamma(t_j)=(r+iu_j)e^{i\theta}  \quad (1\le j\le n:=N(r,\theta)),
\]
which we order so that $u_1<u_2<\cdots<u_n$.  Moreover, aside from finitely many values
of $r$, the line $D_{r,\theta}$ avoids $A\cup B$ and meets $\Gamma$ transversally
at those points: the function $h$ given by \eqref{eq:h} is defined and non-zero at
$t_1,\dots,t_n$.  For each $j$ with $1\le j<n$, the open line segment $(z_j,z_{j+1})$
on $D_{r,\theta}$ does not meet $\Gamma$ and so is contained in one of the connected
regions $\cR_1$ or $\cR_2$.  Geometrically, this means that $\gamma$ crosses $D_{r,\theta}$
in opposite directions at $z_j$ and at $z_{j+1}$ so to have this line segment on
the same side at both points, as illustrated on Figure \ref{fig2}.  Thus $h(t_j)$
and $h(t_{j+1})$ have opposite signs.  Using \eqref{eq:gamma'}, we find that
$t\gamma'(t)=P(\gamma(t))/Q(\gamma(t))$ where
\[
 P(z)=(z-\alpha_1)\cdots(z-\alpha_s)
 \et
 Q(z)=N(z-\beta_1)^{m_1}\cdots(z-\beta_p)^{m_p}
\]
have degree $s$ and $s-1$ respectively.  Since each $t_j$ is positive, it
follows that, for $j=1,\dots,n$, the number $h(t_j)$ has the same sign
as $\tg_{r,\theta}(u_j)$ where
\[
 \tg_{r,\theta}(u)
  = \Re\left(e^{-i\theta}P\big((r+iu)e^{i\theta}\big)
         \overline{Q}\big((r-iu)e^{-i\theta}\big)\right).
\]
Thus the polynomial $\tg_{r,\theta}$ alternates sign at $u_1,\dots,u_n$.
Since it has degree at most $2s-1$, we conclude that $N(r,\theta)=n\le 2s$.
This implies that $\int_\bR N(r,\theta)\dr\le 4sR$ and so
the Cauchy-Crofton formula \eqref{eq:CC} yields an upper bound of
$2\pi sR$ for the length of $\gamma([0,c])$.
\end{proof}

\begin{figure}[h]
 \begin{tikzpicture}[xscale=0.7,yscale=0.45]
       \node[draw,circle,inner sep=1pt,fill] at (5.38,3.86) {};
       \draw (5.38,3.86) node[above right]{$\alpha$};
       \draw[thick] (-6,-6) -- (-7,-2) -- (-5,4) -- (0,6) -- (5.38,3.86) -- (6,-2) -- (1,-6) -- (-6,-6);
       \draw [-{Latex[length=3mm,width=2mm]},thick] (-3,-6) -- (-2.9,-6);
       \draw [-{Latex[length=3mm,width=2mm]},thick] (3.5,-4) -- (3.6,-3.92);
       \draw (3.5,-4) node[below right] {$\Gamma$};
       \draw [-{Latex[length=3mm,width=2mm]},thick, domain=0.045:0.1, samples=40]
             plot ( {240+540*(1-\x)} : {5*(1-\x)+5*(\x-0.2)^2/(0.02+\x)} );
       \draw [thick, domain=0.1:0.2, samples=40]
             plot ( {240+540*(1-\x)} : {5*(1-\x)+5*(\x-0.2)^2/(0.02+\x)} );
       \draw [-{Latex[length=3mm,width=2mm]},thick, domain=0.2:0.42, samples=40]
             plot ( {240+540*(1-\x)} : {5*(1-\x)} );
       \draw [thick, domain=0.42:1, samples=40]
             plot ( {240+540*(1-\x)} : {5*(1-\x)} );
       \node[draw,circle,inner sep=1pt,fill] at (240:0) {};
       \draw (240:0) node[above right]{$\beta$};
       \draw [-{Latex[length=3mm,width=2mm]},thick, domain=0:0.3, samples=40]
             plot ( 60+540*\x: {5*\x});
       \draw [-{Latex[length=3mm,width=2mm]},thick, domain=0.3:0.59, samples=40]
             plot ( 60+540*\x: {5*\x});
       \draw [thick, domain=0.59:0.8, samples=40]
             plot ( 60+540*\x: {5*\x});
       \draw [-{Latex[length=3mm,width=2mm]},thick, domain=0.8:0.925, samples=40]
             plot ( 60+540*\x: {5*\x+10*(\x-0.8)^2/(1.04-\x)});
       \draw [thick, domain=0.925:0.96, samples=40]
             plot ( 60+540*\x: {5*\x+10*(\x-0.8)^2/(1.04-\x)});
       \node[draw,circle,inner sep=1pt,fill] at (-6.27,-4.97) {};
       \draw (-6.27,-4.97) node[left]{$\gamma(c)$};
       \draw [thick] (-8,-3.5) -- (8,0.5);
       \draw (-3,-4) node[left] {$\cR_1$};
       \draw (4,3) node[left] {$\cR_2$};
       \draw (8,0.5) node[below]{$D_{r,\theta}$};
       \end{tikzpicture}
\caption{The curve $\Gamma$ in the proof of Theorem \ref{thm:general-to-boundary}}
\label{fig2}
\end{figure}
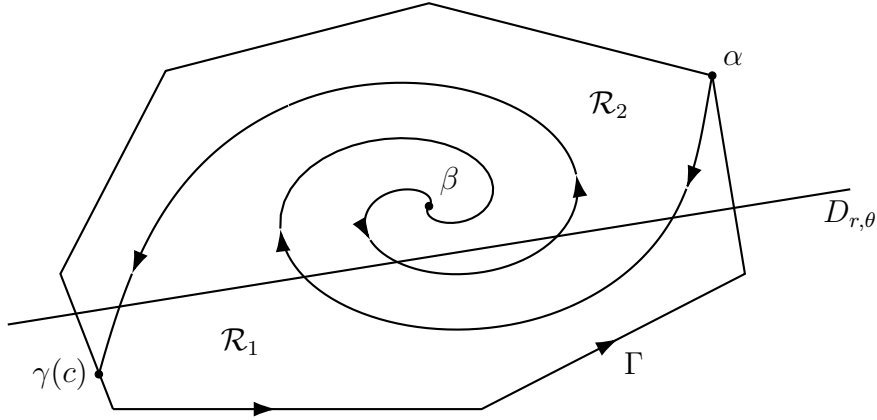

In general, any continuous map $\gamma\colon[0,1]\to\cK$ satisfying
$f(\gamma(t))=tf(\beta)$ for each $t\in[0,1]$ and some $\beta\in\cK$
extends to a continuous map $\gamma\colon[a,c]\to\cK$ satisfying
the same condition on a maximal interval $[a,c]$ containing $[0,1]$,
and we obtain a simple closed curve $\Gamma$ dividing $\bC$ in two
connected components by following $\gamma$ on $[a,c]$ and coming back to
$\gamma(a)$ along the boundary of $\cK$.  Then most of the argument
goes through, except that, if $\gamma(0)$ is not on the boundary of
$\cK$, then $a$ is negative and, for points $t_j$ with
$a<t_j<0$ (if any), the sign of $h(t_j)$ is opposite to that of
$\tg_{r,\theta}(u_j)$.  It may even happen that all $\tg_{r,\theta}(u_j)$
have the same sign if $\gamma$ winds several times around $\gamma(0)$
on $[a,0]$ and unwinds an equal number of times on $[0,c]$ so that $t_j$ and
$t_{j+1}$ have opposite signs for $j=1,\dots,n-1$.

\section{A tree of paths}
\label{sec:tree}

We construct a graph on the set $A\cup B$ in the following way.  For each
$j=1,\dots,p$, the difference $f(z)-f(\beta_j)$ has a zero of multiplicity
$\ell=m_j+1$ at $\beta_j$ and so it is locally the $\ell$-th power of
a diffeomorphism as in \eqref{paths:h} (with $z_0=\beta_j$).  It follows
that, in a sufficiently small neighborhood of $\beta_j$, there are exactly
$\ell$ paths of steepest descent issued from $\beta_j$ and their tangents
make equal angles of $2\pi/\ell$ at that point.  Each of these paths can be continued
uniquely until it reaches another point of $A\cup B$.  Doing this for
each $j=1,\dots,p$, we obtain a total of $\sum_{j=1}^p(m_j+1)=s+p-1$ paths
from a point of $B$ to a point in $A\cup B$.  These paths intersect only
on their end-points because there is only one path of steepest descent
through each point not in $A\cup B$.  So they make the edges of
a graph with $A\cup B$ as its set of vertices.

We claim that this graph has no cycle.  Indeed, if there were a cycle,
we would obtain a simple closed curve $\Gamma$ by composing a number
of these paths.  Consider the bounded connected region $\cR$ delimited by
$\Gamma$.  By the maximum modulus principle, the maximum of $|f|$ on $\cR$
is achieved at a point of $\Gamma$ and so at some $\beta_j$ on $\Gamma$.
This is impossible because, in a neighborhood of $\beta_j$, the curve
$\Gamma$ consists of two paths of steepest descent issued from $\beta_j$.
However, in each of the two angles formed by these paths at $\beta_j$,
there is at least one path of steepest ascent for $|f|$ issued from
$\beta_j$ (locally, there are exactly $m_j+1$ such paths and their tangents
at $\beta_j$ are bisectors of the angles formed by the $m_j+1$
tangents to the paths of steepest descent from $\beta_j$).  This is
impossible since one of these paths would enter in $\cR$,
and $|f|$ increases along such a path.

Since the graph has no cycle and its number of edges is $s+p-1$, one less
than the cardinality of its set of vertices $A\cup B$, we conclude that
it is a tree (a connected graph with no cycle).

In \cite[\S8]{Ro2019}, we use the above to estimate some integrals. To present
this application, fix some $\beta_j\in B$ and consider two paths of steepest
descent for $|f|$ from $\beta_j$ to roots of $f$.  Since the graph constructed
above has no cycle, these paths have distinct end-points, say $\alpha_i$ and
$\alpha_k$ with $i\neq k$.  By composing these paths, we obtain a continuous map
$\gamma\colon[0,1]\to\cK$ with $\gamma(0)=\alpha_i$, $\gamma(1/2)=\beta_j$
and $\gamma(1)=\alpha_k$ such that $|f(\gamma(t))|$ is maximal equal to
$|f(\beta_k)|$ at $t=1/2$.  According to Theorem \ref{thm:descent-general},
the length of $\gamma$ is at most $2\pi NR$, and so we obtain
\[
 \left|\int_{\alpha_i}^{\alpha_k} f(z)e^{-z}\dz\right|
   \le \cL(\gamma)\max_\gamma|f|\max_\cK |e^{-z}|
   \le 2\pi NRe^R |f(\beta_j)|
\]
by integrating along $\gamma$, and assuming the disk $D$ centered at $0$.  
In view of Theorem \ref{thm:general-to-boundary},
if $\alpha_i$ and $\alpha_k$ lie on the boundary of $\cK$, we may replace
the factor $N$ by $2s$ in this estimate.  We wonder if one can replace
$N$ by a function of $s$ and $R$ in general.

Note that, for $r=|f(\beta_j)|$, the above points $\alpha_i$
and $\alpha_k$ belong to disjoint connected components of the set
$U_r:=\{z\in\bC\,;\,|f(z)|<r\}$ because otherwise we could construct a
simple closed curve passing through $\beta_j$, contained in
$\gamma([0,1])\cup U_r$, and that would violate
the maximum modulus principle for $f$.  Thus, along any
continuous path linking $\alpha_i$ and $\alpha_k$, the maximum of $|f|$ is at
least equal to $|f(\beta_j)|$.  In general, for any $r>0$, the number
of connected components of $U_r$ is one more than the number of zeros
of $f'$ outside of $U_r$, counting multiplicities \cite[\S2]{Wa1935}.

\section{Finite Blaschke products}
\label{sec:Blaschke}

Many of the above observations apply as well to finite Blaschke products mapping
the open unit disk $D=\{z\in\bC\,;\,|z|<1\}$ to itself.  These are rational
functions of the form
\[
 f(z)=c\prod_{j=1}^s\Big(\frac{z-\alpha_j}{1-\overline{\alpha}_jz}\Big)^{n_j}
\]
with $|c|=1$, where $A=\{\alpha_1,\dots,\alpha_s\}$ is the set of distinct
zeros of $f$ in $D$, and where $n_j\ge 1$ denotes the multiplicity of
$\alpha_j$ for $j=1,\dots,s$.  Set $N=n_1+\dots+n_s$ and suppose for simplicity
that $f$ is normalized so that $f(0)=0$ and $f(1)=1$.  Then, its derivative $f'$
has exactly $N-1$ zeros in $D$ counting multiplicities, and these in turn completely
determine $f$ (see \cite{SW2019} for details and references).  For the set
$B$ of zeros $\beta_1,\dots,\beta_p$ of $f'$ in $D\setminus A$ and
their respective multiplicities $m_1,\dots,m_p$, this means in particular that
$m_1+\dots+m_p=s-1$.

Paths of steepest descent for $|f|$ starting on a point of $D$ remain in $D$
and may be continued until they reach a zero of $f$ in $A$, since $|f|$ decreases
along such a path.  In particular, we may form all paths of steepest descent
from an element of $B$ down to a first new element of $A\cup B$.  Following
the argument of section \ref{sec:tree}, this yields $p+s-1$ curves which draw
the edges of a tree on $A\cup B$.

Finally, fix any path of steepest descent for $|f|$ from a point $\beta\in
D\setminus A$ to a zero of $f$ in $A$.  We claim that its length is at most $2\pi N$.
Since $D$ has radius $1$, the Cauchy-Crofton formula reduces this to showing
that, except for finitely many angles $\theta\in [0, 2\pi]$, the lines $D_{r,\theta}$
with $r\in\bR$ given by \eqref{eq:Drtheta} meet the path in at most $2N$ points.
Note that, as $f(0)=0$, we may assume that $\alpha_1=0$ and so,
for each $z\in D$, the complex number $f(z)$ is a real multiple of
\[
 \tf(z)=cz^{n_1}\prod_{j=2}^s\big((z-\alpha_j)(1-\alpha_j\overline{z})\big)^{n_j}.
\]
By Lemma \ref{paths:lemma}, the ratio $f(z)/f(\beta)$ is a real number at
each point $z$ along the path, and thus $\Im(\tf(z)/f(\beta))=0$.  So, for given
$\theta\in [0, 2\pi]$ and $r\in\bR$, the number of points of the path on the line
$D_{r,\theta}$ is at most equal to the number of real zeros
of the polynomial $\Im(\tf((r+iu)e^{i\theta})/f(\beta))\in \bR[u]$.  Since this polynomial
has degree at most $2N-n_1$ and since its coefficient of $u^{2N-n_1}$ is
$\Im(c'e^{in_1\theta})$ with a constant $c'\neq 0$, the conclusion follows.

Again, we wonder if there is an upper bound for that length which depends only on $s$.

\subsection*{Acknowledgments}
The author thanks the anonymous referee for his suggestions and
for mentioning the interesting techniques of Ruscheweyh in
\cite{Ru1984}.  He also thanks the journal editor Javad Mashreghi
for the reference \cite{WS2011} and his suggestion to look at
finite Blaschke products.

\vfill

\small
\vbox{
\hbox{Damien \sc Roy}\par
\hbox{D\'epartement de math\'ematiques et de statistique}\par
\hbox{Universit\'e d'Ottawa}\par
\hbox{150 Louis Pasteur}\par
\hbox{Ottawa, Ontario}\par
\hbox{Canada K1N 6N5}
}

\end{document}